\documentclass[11pt]{article}
\usepackage[top=2.54cm, bottom=2.54cm, left=3cm, right=3cm]{geometry}
\usepackage[tbtags]{amsmath}
\usepackage{amssymb}
\usepackage{amsthm}
\usepackage{appendix}
\usepackage{latexsym}
\usepackage{mathrsfs}
\usepackage{xcolor}
\usepackage{wasysym}
\usepackage{fancyhdr}
\usepackage[colorlinks=true, linkcolor=red, citecolor=blue, urlcolor=magenta]{hyperref}
\usepackage{float}
\usepackage{graphicx}
\usepackage[numbers,sort&compress]{natbib}

\usepackage{enumerate}
\usepackage{verbatim}

\allowdisplaybreaks[4]

\newcommand{\eps}{\varepsilon}

\newtheorem{theorem}{Theorem}[section]

\newtheorem{proposition}[theorem]{Proposition}
\newtheorem{corollary}[theorem]{Corollary}
\theoremstyle{definition}

\newtheorem{conjecture}[theorem]{Conjecture}

\title{The logarithmic exponent of the Ramsey number $r(4,t)$\footnote{Center for Discrete Mathematics, Fuzhou University,
Fuzhou 350108, P.~R.~China. Email: {\tt linqizhong@fzu.edu.cn, 3079280593@qq.com}. Supported in part by the National Key R\&D Program of China (Grant No. 2023YFA1010202) and the NSFC (No.\ 12571361).}}

\author{Qizhong Lin \;\; and \;\; Qiuyang Zhang}

\date{}

\begin{document}

\maketitle

\begin{abstract}
We prove that 
\[
  r(4,t)\ge\frac{t^3}{(\log t)^{2+o(1)}}.
\]
This improves the lower bound of Mattheus and Verstra\"ete
(\emph{Ann. of Math.}, 2024) by a factor of \((\log t)^{2-o(1)}\).

\end{abstract}

\section{Introduction}
\label{sec:intro}

For integers $s,t\ge2$, the \emph{Ramsey number} $r(s,t)$ is
the least integer $n$ such that every $n$-vertex graph contains
either a complete graph $K_s$ or an independent set of size $t$.
Throughout this note, the clique parameter is fixed and $t\to\infty$
through the positive integers. All logarithms are natural.

For each fixed $s\ge3$, the classical upper bound of Ajtai, Koml\'os
and Szemer\'edi \cite{AjtaiKomlosSzemeredi1980}, in the refined form of
Li, Rousseau and Zang \cite{LiRousseauZang2001}, implies
\begin{equation}\label{eq:upper}
 r(s,t)\le C_s\frac{t^{s-1}}{(\log t)^{s-2}}
\end{equation}
with a constant $C_s>0$. At $s=3$, Kim \cite{Kim1995} proved the matching
lower bound $r(3,t)\ge ct^2/\log t$, thereby establishing
$r(3,t)=\Theta(t^2/\log t)$. At $s=4$, Mattheus and Verstra\"ete
\cite{MV2024} established the lower bound
$\Omega(t^3/(\log t)^4)$. Thus identifying the correct logarithmic
power in the lower bound is a separate issue from determining the
polynomial exponent.

Recently, Brada\v{c} \cite{Bradac2026} constructed a new family of
$K_s$-free graphs and proved that for every fixed $s\ge3$,
\begin{equation}\label{eq:bradac}
 r(s,t)\ge\Omega\!\left(\frac{t^{s-1}}{(\log t)^{2s-4}}\right)
 \qquad(t\to\infty).
\end{equation}
This matches the Erd\H{o}s--Szekeres upper bound up to polylogarithmic
factors. For $s=4$, \eqref{eq:bradac} recovers the polynomial exponent
$3$; for $s=5$, it gives $r(5,t)\ge ct^4/(\log t)^6$. The logarithmic
exponent $6$ is not sharp: the following input of OpenAI improves it
to $3+\delta$ for every fixed $\delta>0$.

We take as an external input the following statement from
\cite[Theorem~1.1]{OpenAI2026R5}: for every fixed $\delta>0$,
there exists $T_\delta$ such that
\begin{equation}\label{eq:H5}
 r(5,t)\ge\frac{t^4}{(\log t)^{3+\delta}}\qquad (t\ge T_\delta).
\end{equation}

Our result is as follows.

\begin{theorem}\label{thm:main}
There exists a constant $C>0$ such
that, for every $\eps>0$, there is $t_0(\eps)$ for which
\begin{equation}\label{eq:main-bounds}
 \frac{t^3}{(\log t)^{2+\eps}}
 \le r(4,t)\le C\frac{t^3}{(\log t)^2}
 \qquad(t\ge t_0(\eps)).
\end{equation}
Consequently,
\begin{equation}\label{eq:main}
 r(4,t)=\frac{t^3}{(\log t)^{2+o(1)}}
 \qquad(t\to\infty).
\end{equation}
\end{theorem}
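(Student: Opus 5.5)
The upper bound in \eqref{eq:main-bounds} is immediate from \eqref{eq:upper} with $s=4$. For the lower bound, the plan is to derive it from the $r(5,t)$ bound \eqref{eq:H5} through a neighbourhood argument. This transfers a $K_5$-free graph with no large independent set into a $K_4$-free graph with a comparable independence number.

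Fix $\eps>0$, choose $\delta=\delta(\eps)>0$ small, and let $N=r(5,t')-1$ for a parameter $t'$ to be chosen. Take a $K_5$-free graph $G$ on $N$ vertices with $\alpha(G)<t'$. For every vertex $v$, the neighbourhood graph $G[N(v)]$ is $K_4$-free and satisfies $\alpha(G[N(v)])<t'$. Hence $r(4,t')>\max_v d(v)$.

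So it suffices to show that $G$ has a vertex of large degree. By Tur\'an's theorem (or Caro--Wei), $\alpha(G)\ge N/(\bar d+1)$, where $\bar d$ is the average degree. Therefore $\Delta(G)\ge\bar d\ge N/t'-1$. Together with \eqref{eq:H5} this gives
\[
 r(4,t')>\frac{N}{t'}-1\ge \frac{t'^3}{(\log t')^{3+\delta}}-1 .
\]
This only gives exponent $3+\delta$, which is worse than the target $2+\eps$. So plain Caro--Wei is too lossy, and the main step must gain a full logarithm.

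To gain the logarithm, use the fact that $G$ is $K_5$-free, so its neighbourhoods are $K_4$-free, and apply the AKS/Shearer-type bound for locally sparse graphs. In the Li--Rousseau--Zang form behind \eqref{eq:upper}, a $K_5$-free graph of average degree $d$ satisfies
\[
 \alpha(G)\ge c\,\frac{N}{d}\log d .
\]
(This is the bound that yields $r(5,t)\le Ct^4/(\log t)^3$.) With $\alpha(G)<t'$ and $d\le N$, this gives $d\ge c(N/t')\log(N/t')$, which is of order $c\,t'^3(\log t')^{-3-\delta}\log t'$. Hence some vertex has degree at least
\[
 c\,\frac{t'^3}{(\log t')^{2+\delta}},
\]
and its neighbourhood gives
\[
 r(4,t')\ge c\,\frac{t'^3}{(\log t')^{2+\delta}} .
\]
Taking $\delta=\eps/2$ and $t=t'$, the constant $c$ is absorbed into the extra factor $(\log t)^{\eps/2}$ once $t\ge t_0(\eps)$.

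The key obstacle is the locally sparse independence bound: I must make sure the cited inequality holds for $K_5$-free graphs with the full $\log d$ gain. The Li--Rousseau--Zang theorem gives $\alpha\ge c_s (N/d)\log d$ for $K_s$-free graphs, but only with a factor $1/\log\log d$ or weaker when $s\ge4$. Shearer's $K_s$-free bound is $\alpha\ge c\,(N/d)\log d/\log\log d$. This is the crux. The losses are only $\log\log$ factors, so they are absorbed into $(\log t)^{o(1)}$ and the choice $\delta=\eps/2$ still suffices. I would therefore invoke Shearer's bound in the form $\alpha(G)\ge c_5 (N/d)\log d/\log\log d$ and carry the $\log\log$ loss into $\eps$. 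Finally, \eqref{eq:main} follows from \eqref{eq:main-bounds} by letting $\eps\to0$.
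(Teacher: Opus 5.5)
Your final argument is correct and is essentially the paper's proof. It is the neighbourhood descent of Proposition~\ref{prop:descent}(i) with $k=4$, combined with \eqref{eq:H5} at $\delta=\eps/2$, with the $\log\log t$ loss from Shearer's theorem absorbed into $(\log t)^{\eps/2}$. The only difference is framing: you bound the maximum degree from below via Shearer's average-degree bound, whereas the paper caps it above by $r(4,t)-1$ and applies Shearer with that cap.

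Two small corrections. Drop the intermediate claim that every $K_5$-free graph satisfies $\alpha\ge c(N/d)\log d$. That is not a known result, and Li--Rousseau--Zang requires a degree bound inside neighbourhoods, not merely $K_4$-free neighbourhoods. Also, justify the lower bound on $\log d$ with your Caro--Wei estimate $\bar d\ge N/t'-1$ rather than with $d\le N$. The bound $d\le N$ only supplies the needed upper bound on $\log\log d$.
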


The key observation is that a neighborhood in a $K_{k+1}$-free graph
is $K_k$-free. A suitable independence estimate forces a large
neighborhood whenever the graph itself is large and has small
independence number. Section~\ref{sec:bounds} states two estimates:
Shearer's 1995 theorem \cite{Shearer1995}, and the stronger local
independence bound of Li, Rousseau and Zang \cite{LiRousseauZang2001}.
They lead to two quantitative versions of the same descent inequality.
The first is sufficient for Theorem~\ref{thm:main}; the second yields
a sharper descent inequality, removing the $\log\log t$ factor at a fixed input exponent.

\section{Two independence estimates}
\label{sec:bounds}

For a graph $G$ let $\alpha(G)$ be its independence number,
$\Delta(G)$ its maximum degree, and $N_G(v)$ the neighborhood of
$v\in V(G)$. For a set \(S\subseteq V(G)\), let \(G[S]\) denote the subgraph of \(G\) 
induced by \(S\). As usual, $K_q$-free means that $G$ contains no copy
of $K_q$.

We retain the classical result in the maximum-degree form used in the
original descent argument.

\begin{theorem}[Shearer, 1995]
\label{thm:shearer}
For every fixed integer $q\ge4$, there exist constants $a_q>0$
and $D_q\ge3$ such that, for every integer $D\ge D_q$, every
$K_q$-free graph $G$ with $\Delta(G)\le D$ satisfies
\[
 \alpha(G)\ge a_q\,|V(G)|
      \frac{\log D}{D\log\log D}.
\]
\end{theorem}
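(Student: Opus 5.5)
The plan is the hard-core model argument in Shearer's style, with induction on $q$ replaced by a crude Ramsey bound combined with an entropy estimate. Let $G$ be $K_q$-free on $n$ vertices with $\Delta(G)\le D$. Let $I$ be a uniformly random independent set of $G$, chosen among all independent sets including $\emptyset$. For each vertex $v$ put
\[
X_v = D\,\mathbf 1[v\in I] + |N_G(v)\cap I|.
\]
Since every vertex of $I$ lies in at most $D$ neighborhoods, $\sum_v |N_G(v)\cap I|\le D|I|$, and hence $\sum_v X_v\le 2D|I|$. It therefore suffices to prove that
\[
\mathbb E[X_v]\ge c_q\,\frac{\log D}{\log\log D}\quad\text{for every } v,
\]
because then $\alpha(G)\ge\mathbb E|I|\ge c_q n\log D/(2D\log\log D)$.

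To bound $\mathbb E[X_v]$, condition on $I\setminus(\{v\}\cup N_G(v))$. Let $W\subseteq N_G(v)$ be the set of neighbors of $v$ having no neighbor in the conditioned part. Then $I\cap(\{v\}\cup N_G(v))$ is uniform among $\{v\}$ and the independent sets of $H=G[W]$. Write $N=i(H)$ for the number of independent sets of $H$, and let $J$ be uniform among them. Then
\[
\mathbb E[X_v\mid\cdot\,]=\frac{D}{1+N}+\frac{N}{1+N}\,\mathbb E|J|.
\]
The key input is that $H$ is $K_{q-1}$-free, since it lies in a neighborhood. Two estimates follow.
\begin{enumerate}
\item[(i)] By the Erd\H{o}s--Szekeres bound, $\alpha(H)\ge c|W|^{1/(q-2)}$, so $\log N\ge c'|W|^{1/(q-2)}$.
\item[(ii)] By entropy, $\log N=\mathrm H(J)\le \mathrm H(|J|)+\mathbb E\log\binom{|W|}{|J|}\le \log(|W|+1)+\mathbb E|J|\log|W|$.
\end{enumerate}
Combining (i) and (ii) gives $\log|W|\le (q-2)\log\log N+O(1)$. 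Hence, once $N$ exceeds a constant depending on $q$, we get
\[
\mathbb E|J|\ge c''\,\frac{\log N}{\log\log N}.
\]

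It remains to minimise $f(N)=\frac{D}{1+N}+c''\frac{\log N}{\log\log N}$ over $N\ge1$. Split at $N=D/\log D$. If $N\le D/\log D$, the first term is already at least $\tfrac12\log D$. If $N>D/\log D$, then $\log N\ge \tfrac12\log D$, so the second term is at least $c\log D/\log\log D$. When $N$ is below the constant threshold, the first term alone is $\Omega(D)$. Thus the conditional expectation is uniformly at least $c_q\log D/\log\log D$ once $D\ge D_q$, and averaging over the conditioning finishes the proof.

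The step I expect to need the most care is (ii): converting "many independent sets" into "large expected size of a uniform independent set." The entropy inequality has to be set up cleanly, including the boundary case of small $|W|$. One must also check that the exponent $1/(q-2)$ from Ramsey's theorem only costs a constant factor in $\log\log N$, so that $a_q$ and $D_q$ depend on $q$ alone.
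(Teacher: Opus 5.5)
The paper gives no proof of this statement. It imports the result from Shearer (1995) and uses it as a black box in Proposition~\ref{prop:descent}(i), so there is no internal argument to compare against. Your proposal is a correct, self-contained proof by a different route: the uniform-random-independent-set method (hard-core model at fugacity $1$, as in Alon's work on locally sparse graphs).

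The individual steps check out:
\begin{itemize}
\item The weighting $X_v=D\,\mathbf{1}[v\in I]+|N_G(v)\cap I|$ needs only the degree cap, since $\sum_v|N_G(v)\cap I|=\sum_{u\in I}\deg(u)\le D|I|$.
\item Conditioning on $I\setminus(\{v\}\cup N_G(v))$ leaves the uniform measure on $\{v\}$ together with the independent sets of $H=G[W]$, because $v$ has no neighbour outside $N_G(v)$.
\item The entropy step is just $\mathrm{H}(J)=\mathrm{H}(|J|)+\mathrm{H}(J\mid |J|)$ combined with $\binom{m}{k}\le m^k$.
\end{itemize}

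The boundary cases you worry about are harmless:
\begin{itemize}
\item $|W|\le 1$ forces $N\le 2$. This lies in the bounded-$N$ regime, where $D/(1+N)=\Omega(D)$.
\item Dropping the factor $N/(1+N)\ge\frac12$ from the second term costs only a factor $2$.
\item In the regime $N>D/\log D$ you should say explicitly that you use monotonicity of $x\mapsto x/\log x$ on $[e,\infty)$, since $N$ can be as large as $2^{D}$.
\end{itemize}
All thresholds depend only on $q$, so $a_q$ and $D_q$ do too.

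Compared with the bare citation, your argument buys two things relevant to this paper. First, it works directly with a degree cap rather than the true maximum or average degree. That is exactly the form needed in Proposition~\ref{prop:descent}, where $D=r_k-1$ is only an upper bound on the degrees. Second, it pinpoints where the $\log\log D$ is lost. The only information you extract from $K_{q-1}$-freeness of the neighbourhood is the polynomial Erd\H{o}s--Szekeres bound, which enters as $\log|W|\le (q-2)\log\log N+O_q(1)$ in the entropy estimate. This is precisely the loss the paper avoids in Proposition~\ref{prop:descent}(ii). There, the much stronger local information $\Delta(G[N_G(v)])\le r_{k-1}-1$ is fed into the Li--Rousseau--Zang bound instead. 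The citation buys only brevity.
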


A classical local independence estimate of Li, Rousseau and Zang
\cite{LiRousseauZang2001} gives the following stronger bound.
We quote its form in terms of the average degree of neighborhoods,
because that is what neighborhood descent uses directly.

\begin{theorem}[Li--Rousseau--Zang, 2001]
\label{thm:local-ind}
Let $G$ be a graph with $N$ vertices and average degree $d$.
Let $a\ge0$ be an integer. If the average degree of every subgraph
induced by a neighborhood is at most $a$, then
\[
 \alpha(G)\ge N f_{a+1}(d),
\]
where
\[
 f_m(x)=\int_0^1\frac{(1-u)^{1/m}}{m+(x-m)u}\,du.
\]
In particular, for $x>m\ge1$,
\[
 f_m(x)\ge \frac{\log(x/m)-1}{x}.
\]
\end{theorem}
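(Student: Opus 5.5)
The statement has two parts: the main inequality $\alpha(G)\ge N f_{a+1}(d)$, which is quoted from \cite{LiRousseauZang2001}, and the elementary lower bound for $f_m$. I would prove the elementary bound in full and only sketch how the main inequality is obtained.

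\textbf{The main inequality.} The plan follows Shearer's inductive scheme, as adapted by Li--Rousseau--Zang. Write $m=a+1$.

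1. First check that $f_m$ is positive, decreasing and convex on $(0,\infty)$.
2. Next check that $f_m$ satisfies a first-order differential relation. It is the analogue, with $m$ in place of $1$, of Shearer's identity $(d+1)f(d)=1+(d-d^2)f'(d)$.
3. Induct on $N$. Choose a vertex $v$ suitably (for instance at random, or to maximize a weighted quantity). Delete the closed neighbourhood $N[v]$, and apply induction to the remaining graph. Its vertices are $N-\deg(v)-1$ and its average degree is $d'$.
4. Bound the number of deleted edges from below. This uses the hypothesis that $G[N(v)]$ has average degree at most $a$. The edges with an endpoint in $N(v)$ number at least $\sum_{u\in N(v)}\deg(u)-\tfrac{a}{2}\deg(v)$.
5. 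Average over $v$ and combine with the convexity of $f_m$ (a tangent-line estimate at $d$). This reduces the inductive step to exactly the differential relation from step 2.

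I expect the main obstacle to be verifying the differential relation and the tangent-line estimate with the exact integral form of $f_m$. Since the paper quotes this result, I would not redo it.

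\textbf{The elementary bound.} Here a complete argument is short. Fix $x>m\ge1$ and set $c=x-m>0$ and $L=\log(x/m)>0$. Since $m\ge1$ and $0\le 1-u\le1$, we have $(1-u)^{1/m}\ge 1-u$, and therefore
\[
 f_m(x)\ge\int_0^1\frac{1-u}{m+cu}\,du .
\]
Write $1-u=\bigl((m+c)-(m+cu)\bigr)/c$ and use $m+c=x$. The integral then evaluates to
\[
 \int_0^1\frac{1-u}{m+cu}\,du=\frac{x}{c^2}L-\frac1c .
\]

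It remains to show that $\frac{x}{c^2}L-\frac1c\ge\frac{L-1}{x}$. Subtracting the right-hand side from the left gives
\[
 L\Bigl(\frac{x}{c^2}-\frac1x\Bigr)-\Bigl(\frac1c-\frac1x\Bigr)
 =\frac{m}{cx}\Bigl(\frac{L(x+c)}{c}-1\Bigr),
\]
where we used $x^2-c^2=m(x+c)$ and $x-c=m$. So the claim reduces to $L(2x-m)\ge x-m$.

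This last inequality holds because $L=\log(x/m)\ge 1-m/x=(x-m)/x$ and $(2x-m)/x\ge1$. Multiplying these two bounds gives $L(2x-m)\ge x-m$, which proves $f_m(x)\ge(\log(x/m)-1)/x$.
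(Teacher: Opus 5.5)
\textbf{Verdict: correct, and it goes slightly beyond the paper.} The paper does not prove this statement at all; both the main inequality and the bound $f_m(x)\ge(\log(x/m)-1)/x$ are quoted from Li--Rousseau--Zang. You defer the main inequality to the same source, so there you coincide with the paper. The difference is that you actually prove the elementary bound, and your argument is correct:
\begin{itemize}
\item the comparison $(1-u)^{1/m}\ge 1-u$ holds because $1/m\le 1$;
\item the exact evaluation $\int_0^1\frac{1-u}{m+cu}\,du=\frac{x}{c^2}L-\frac1c$ is right;
\item the identity $x^2-c^2=m(x+c)$ correctly reduces the claim to $L(2x-m)\ge x-m$;
\item the final step via $\log y\ge 1-1/y$ is valid.
\end{itemize}
This makes self-contained exactly the part of the theorem that Proposition~\ref{prop:descent}(ii) uses quantitatively. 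Your intermediate bound $\frac{x}{c^2}L-\frac1c$ is also sharper than the stated one.

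One caution about your sketch of the main inequality. Step 3 applies induction to $G-N[v]$, so the neighbourhood hypothesis must be inherited by that induced subgraph. The maximum-degree version, $\Delta(G[N(w)])\le a$ for all $w$ (equivalently, every edge lies in at most $a$ triangles), is inherited. With it your steps go through, and the relation in step 2 is $(x+1)f_m(x)=1+x(m-x)f_m'(x)$. The average-degree version as literally stated is not inherited, because $N_{G-N[v]}(w)=N_G(w)\setminus N(v)$ can induce a denser graph than $N_G(w)$ does. For example, a triangle plus three isolated vertices of $N(v)$ has average degree $1$, but the triangle alone has average degree $2$. So that formulation would need an extra argument. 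This does not affect the paper, because Proposition~\ref{prop:descent}(ii) actually verifies the maximum-degree condition.
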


Both statements keep the forbidden clique size fixed and allow $D$ to
be a degree cap rather than the actual maximum degree. The difference
is precisely the additional denominator $\log\log D$ in
Theorem \ref{thm:shearer}. Theorem~\ref{thm:local-ind} is used below only
as a classical published input.

\section{Comparison of adjacent clique orders}
\label{sec:comparison}

Write $r_k=r(k,t)$.
The following proposition separates the classical comparison from the
sharper one supplied by Theorem~\ref{thm:local-ind}.

\begin{proposition}\label{prop:descent}
Let $k\ge3$ be fixed. There are constants $c_k,\widetilde c_k>0$
and $t_k$ such that for every integer $t\ge t_k$ the following
statements hold.

\smallskip
\textbf{(i)} Using Shearer's theorem alone,
\begin{equation}\label{eq:classic-descent}
 r_k\ge c_k\frac{\log t}{t\log\log t}\,r_{k+1}.
\end{equation}

\textbf{(ii)} Using Theorem~\ref{thm:local-ind} together with the asymptotic
\(r(k,t)=t^{k-1+o(1)}\), we have
\begin{equation}\label{eq:strong-descent}
 r_k\ge\widetilde c_k\frac{\log t}{t}\,r_{k+1}.
\end{equation}
The constants depend only on $k$.
\end{proposition}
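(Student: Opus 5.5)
Take an extremal graph $G$ on $N=r_{k+1}-1$ vertices that is $K_{k+1}$-free with $\alpha(G)<t$. For every vertex $v$, the neighborhood $G[N_G(v)]$ is $K_k$-free and has independence number less than $t$. Hence $\deg v\le r_k-1$, so $\Delta(G)\le D:=r_k$. The aim is to turn a lower bound on $\alpha(G)$ in terms of $D$ into the comparison between $r_k$ and $r_{k+1}$.

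\textbf{Part (i).} Apply Theorem~\ref{thm:shearer} with $q=k+1\ge4$ and the degree cap $D=r_k$, which exceeds $D_{k+1}$ once $t$ is large. This gives
\[
 t>\alpha(G)\ge a_{k+1}N\frac{\log D}{D\log\log D}.
\]
The function $x\mapsto \log x/\log\log x$ is increasing for large $x$. Since $t\le r_k\le t^{k-1}$, using the trivial bound $r_k\ge t$ gives $\log D/\log\log D\ge \log t/\log\log t$. Combining this with $N\ge r_{k+1}/2$ and rearranging yields
\[
 r_k=D\ge \tfrac{a_{k+1}}{2}\frac{\log t}{t\log\log t}\,r_{k+1},
\]
which is \eqref{eq:classic-descent}. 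Only monotonicity is needed here, no upper bound on $r_k$.

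\textbf{Part (ii).} Apply Theorem~\ref{thm:local-ind} to $G$. Two quantities must be controlled.

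First, the average degree inside each neighborhood. The graph $H=G[N_G(v)]$ is $K_k$-free with $\alpha(H)<t$. Each vertex $w$ of $H$ has $N_H(w)$ that is $K_{k-1}$-free with independence number less than $t$, so $\Delta(H)<r_{k-1}$. We may therefore take $a=r_{k-1}$. For $k=3$ this means $K_2$-free neighborhoods, that is, $r_2=t$, which is fine.

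Second, the average degree $d$ of $G$. We have $d\le D=r_k$. The function $x f_m(x)$ is nondecreasing, so $f_m(x)$ is of order $\log(x/m)/x$. Rather than work with $d$ directly, I would use the monotonicity of $f_m$: since $f_m$ is decreasing in $x$, we get $f_{a+1}(d)\ge f_{a+1}(D)$. Then, for $D>a+1$,
\[
 t>\alpha(G)\ge N\,\frac{\log(D/(a+1))-1}{D}.
\]

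\textbf{Main obstacle.} The key step is lower-bounding $\log(r_k/(r_{k-1}+1))$ by $c\log t$. This is exactly where the asymptotic $r(k,t)=t^{k-1+o(1)}$ enters. It gives $r_k/r_{k-1}=t^{1+o(1)}$, so $\log(D/(a+1))-1\ge \tfrac12\log t$ for $t\ge t_k$. This is also why part (ii) is stated under that hypothesis, which at present is known for $k\le 5$. With that bound, $t>N\log t/(2D)$, so $r_k\ge \tfrac14 (\log t/t)\, r_{k+1}$, which is \eqref{eq:strong-descent}.

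One might hope the ratio bound follows unconditionally from $r_k\ge \mathrm{const}\cdot(\log t/t\log\log t)^{-1}r_{k-1}$ applied in the opposite direction. That inequality gives the wrong direction, however, so the asymptotic input really is needed.
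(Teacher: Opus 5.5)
Your proof is correct and follows the paper's argument: an extremal $K_{k+1}$-free graph with the neighborhood degree cap $\Delta(G)\le r_k$; Shearer's theorem for (i); and for (ii) the bound $\Delta(G[N_G(v)])<r_{k-1}$, the monotonicity $f_{a+1}(d)\ge f_{a+1}(D)$, and $r_k/r_{k-1}=t^{1+o(1)}$ to get $\log(D/(a+1))-1\ge c\log t$. The differences are minor: in (i), using that $\log x/\log\log x$ is increasing together with $r_k\ge t$ is a slight simplification of the paper's route, which controls $\log\log D$ via the upper bound $D=O_k(t^{k-1})$; and the paper takes $r(k,t)=t^{k-1+o(1)}$ for every fixed $k$ from Brada\v{c}'s result, not only for $k\le 5$ as you state (and only the lower bound on $r_k$ is really needed, since $r_{k-1}=O_k(t^{k-2})$ is elementary).
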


\begin{proof}
Set $n=r_{k+1}-1$ and $D=r_k-1$.
By the definition of \(r_{k+1}\), there exists a \(K_{k+1}\)-free
graph \(G\) on \(n\) vertices with \(\alpha(G)\le t-1\).

For every \(v\in V(G)\), the subgraph \(G[N_G(v)]\) is \(K_k\)-free.
If \(|N_G(v)|\ge r_k$, then, by the definition of $r_k\), the
neighborhood \(N_G(v)\) contains an independent set of size \(t\).
This set is also independent in \(G\), contradicting
\(\alpha(G)\le t-1\). Hence \(|N_G(v)|\le r_k-1=D\), and therefore
\begin{equation}\label{eq:degree-cap}
 \Delta(G)\le D.
\end{equation}
Moreover, \(\alpha(G[N_G(v)])\le \alpha(G)\le t-1\).

The elementary Ramsey bounds give
\begin{equation}\label{eq:D-range}
 t-1\le D\le\binom{t+k-2}{k-1}-1=O_k(t^{k-1}).
\end{equation}
In particular, \(D\to\infty\) as \(t\to\infty\), and
\begin{equation}\label{eq:log-ratio}
 \log D\ge\log t-O(1),\qquad
 \log\log D\le\log\log t+O_k(1).
\end{equation}

\medskip
\textbf{(i)} Applying Theorem~\ref{thm:shearer} with forbidden
clique size \(q=k+1\) to \(G\) yields
\[
\alpha(G)\ge a_{k+1} n\frac{\log D}{D\log\log D}.
\]
Since \(\alpha(G)\le t-1\), we obtain
$t-1\ge a_{k+1} n\frac{\log D}{D\log\log D}$,
or equivalently
\begin{equation}\label{eq:exact-classical}
 n\le\frac{t-1}{a_{k+1}}\,
      D\frac{\log\log D}{\log D}.
\end{equation}
By \eqref{eq:log-ratio}, for sufficiently large \(t\) the last
fraction is \(O_k(\log\log t/\log t)\).
Substituting \(n=r_{k+1}-1\) and \(D=r_k-1\), and absorbing the
additive \(1\), we obtain that there is a constant $c_k>0$ such that
\[
r_k\ge c_k\frac{\log t}{t\log\log t}\,r_{k+1}.
\]
This proves \eqref{eq:classic-descent}.

\medskip
\textbf{(ii)} Using Theorem~\ref{thm:local-ind} together with the asymptotic \(r(k,t)=t^{k-1+o(1)}\), 
we obtain the stronger descent inequality.

Note that for every \(v\in V(G)\), the neighborhood \(N_G(v)\) induces a \(K_k\)-free subgraph 
with independence number at most \(t-1\). We claim that 
\(\Delta(G[N_G(v)])\le a:=r_{k-1}-1\). Indeed, if some 
\(u\in N_G(v)\) had degree at least \(r_{k-1}\) in \(G[N_G(v)]\), 
then \(N_G(u)\cap N_G(v)\) would be \(K_{k-1}\)-free and have 
at least \(r_{k-1}\) vertices, hence would contain an independent 
set of size \(t\), contradicting \(\alpha(G)\le t-1\). Thus the 
average degree of \(G[N_G(v)]\) is at most \(a\) for every \(v\), 
so the hypothesis of Theorem~\ref{thm:local-ind} is satisfied. 

Since \(f_{a+1}\) is decreasing and \(d\le D\), we have 
\(f_{a+1}(d)\ge f_{a+1}(D)\). As \(D>a+1\) 
for all sufficiently large \(t\), by Theorem~\ref{thm:local-ind}, we obtain
\[
t-1\ge \alpha(G)\ge n f_{a+1}(D)
      \ge n\frac{\log (D/(a+1))-1}{D}.
\]
Dividing by \(t-1\) and using \(t-1<t\), it follows that
\[
D\ge \frac{n}{t}\bigl(\log(D/(a+1))-1\bigr).
\]
By the recent result of Brada\v{c} \cite{Bradac2026},
\(r(k,t)=t^{k-1+o(1)}\) for every fixed \(k\). Hence
$\frac{r_k}{r_{k-1}}=t^{1+o(1)}$.
In particular, for all sufficiently large \(t\),
$r_k\ge 2t^{1/2}r_{k-1}$.
Therefore
$\frac{D}{a+1}=\frac{r_k-1}{r_{k-1}}\ge  t^{1/2}$,
and so
$\log\frac{D}{a+1}-1\ge \frac{1}{4}\log t$
for all sufficiently large \(t\). Substituting \(n=r_{k+1}-1\) and
\(D=r_{k}-1\), and absorbing the additive \(1\), we obtain $ r_k\ge\widetilde c_k\frac{\log t}{t}\,r_{k+1}$ for some constant $\widetilde c_k>0$, which proves \eqref{eq:strong-descent}.
\end{proof}

Two features of the comparison are worth noting. It preserves the
independence-set parameter $t$; only the forbidden clique size falls
by one. Also, the stronger comparison does not require any new
geometric construction: its improvement comes solely from the
stronger input from Theorem~\ref{thm:local-ind}.

\section{Consequences for \texorpdfstring{$r(4,t)$}{r(4,t)} and fixed orders}
\label{sec:consequences}

\begin{proof}[Proof of Theorem~\ref{thm:main}]
By \eqref{eq:classic-descent} with \(k=4\), there is a constant \(c>0\)
such that, for all sufficiently large \(t\),
\begin{equation}\label{eq:r4-classic}
 r(4,t)\ge c\,\frac{\log t}{t\log\log t}r(5,t).
\end{equation}
Fix \(\eps>0\) and choose \(\delta=\eps/2\) in \eqref{eq:H5}.
For all sufficiently large \(t\), this gives
\begin{equation}\label{eq:r4-with-loss}
 r(4,t)\ge c\frac{t^3}{(\log t)^{2+\eps/2}\log\log t}.
\end{equation}
Since \(\log\log t=o((\log t)^{\eps/2})\), the right-hand side is
at least \(t^3/(\log t)^{2+\eps}\) for all sufficiently large \(t\).
This proves the lower bound in \eqref{eq:main-bounds}.
The classical upper bound \eqref{eq:upper} with \(s=4\) gives the
other bound, and hence \eqref{eq:main}.
\end{proof}

The two independence estimates yield different quantitative lower bounds at the same fixed \(\delta\).
\begin{equation}\label{eq:comparison}
 \begin{aligned}
 \text{Shearer 1995:}\quad
 &r(4,t)\ge c\,\frac{t^3}{(\log t)^{2+\delta}\log\log t},\\
 \text{Li--Rousseau--Zang 2001:}\quad
 &r(4,t)\ge\widetilde c\,\frac{t^3}{(\log t)^{2+\delta}}.
 \end{aligned}
\end{equation}
These are estimates at the \emph{same fixed} \(\delta\) coming from the
same \(r(5,t)\) input. Both lead to the logarithmic exponent \(2+o(1)\);
the Li--Rousseau--Zang transfer is more efficient, but it is not a
constant-factor asymptotic for \(r(4,t)\).

\begin{corollary}
\label{cor:sharp}
Fix integers \(3\le m<s\). If
\[
 r(s,t)=\frac{t^{s-1}}{(\log t)^{s-2+o(1)}},
\]
then
\[
 r(m,t)=\frac{t^{m-1}}{(\log t)^{m-2+o(1)}}.
\]

\end{corollary}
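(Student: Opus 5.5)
The plan is downward induction on the clique order: descend from $s$ to $m$ one step at a time using the descent inequality of Proposition~\ref{prop:descent}, and get the matching upper bound directly from \eqref{eq:upper}. The upper half needs no hypothesis. For every fixed $m\ge3$, \eqref{eq:upper} gives
\[
r(m,t)\le C_m\,t^{m-1}(\log t)^{-(m-2)},
\]
so the exponent of $\log t$ in $r(m,t)\,t^{-(m-1)}$ is at least $-(m-2)-o(1)$. All the work is therefore in the lower bound.

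For the lower bound I would set $g(k)$ to be the statement that $r(k,t)\ge t^{k-1}(\log t)^{-(k-2)-o(1)}$. I will prove $g(s)\Rightarrow g(s-1)\Rightarrow\cdots\Rightarrow g(m)$. Suppose $g(k+1)$ holds for some $k$ with $m\le k<s$; note $k\ge3$, so Proposition~\ref{prop:descent} applies. Part (i) gives
\[
r_k\ge c_k\frac{\log t}{t\log\log t}\,r_{k+1}\ge c_k\frac{\log t}{t\log\log t}\cdot\frac{t^{k}}{(\log t)^{k-1+o(1)}}=\frac{t^{k-1}}{(\log t)^{k-2+o(1)}}.
\]
Here the factors $c_k$ and $(\log\log t)^{-1}$ are both $(\log t)^{-o(1)}$, so they are absorbed into the $o(1)$ in the exponent. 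This is exactly the computation in the proof of Theorem~\ref{thm:main}, with $\eps$ replaced by a general $o(1)$ term.

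The descent makes only $s-m$ steps, a fixed number. So the accumulated losses amount to finitely many $o(1)$ terms and constants, and the sum is still $o(1)$. Combining this with the upper bound gives $r(m,t)=t^{m-1}/(\log t)^{m-2+o(1)}$.

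The only care point is formalizing $o(1)$ in the exponent. I would phrase $g(k)$ as: for every $\eps>0$ we have $r(k,t)\ge t^{k-1}(\log t)^{-(k-2)-\eps}$ for all large $t$. With that phrasing each step is the $\eps/2$ argument used in the proof of Theorem~\ref{thm:main}. Part (ii) of Proposition~\ref{prop:descent} could be used instead to avoid the $\log\log t$ loss, but it is not needed here.
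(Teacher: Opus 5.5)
Your proposal is correct and is essentially the paper's argument: you iterate Proposition~\ref{prop:descent}(i) over the $s-m$ steps and take the upper half from \eqref{eq:upper}. The only difference is bookkeeping, since you absorb the constant and the $\log\log t$ loss at each step, while the paper multiplies the inequalities and absorbs $(\log\log t)^{s-m}$ once. One small slip: \eqref{eq:upper} bounds the exponent of $\log t$ in $r(m,t)\,t^{-(m-1)}$ from \emph{above}, by $-(m-2)+o(1)$, not from below as you wrote.
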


\begin{proof}
Fix \(\eta>0\). The hypothesis gives
\(r(s,t)\ge t^{s-1}/(\log t)^{s-2+\eta}\) for all
sufficiently large \(t\).
Applying Proposition~\ref{prop:descent}(i) successively for
\(k=m,m+1,\ldots,s-1\) and multiplying the resulting inequalities,
we obtain
$r(m,t)\ge c_{m,s}\,r(s,t)
 \left(\frac{\log t}{t\log\log t}\right)^{s-m}$
for some constant \(c_{m,s}>0\). Substituting the hypothesis gives
$r(m,t)\ge c_{m,s}
 \frac{t^{m-1}}{(\log t)^{m-2+\eta}(\log\log t)^{s-m}}$.
Since \(m\) and \(s\) are fixed, \((\log\log t)^{s-m}\) is smaller than
any fixed positive power of \(\log t\). It follows that for every
\(\eps>0\), \(r(m,t)\ge t^{m-1}/(\log t)^{m-2+\eps}\) for all
sufficiently large \(t\), after taking \(\eta<\eps\).
The matching upper bound at the logarithmic-exponent level follows
from \eqref{eq:upper}.
\end{proof}

\section{Discussion and limitations}
\label{sec:discussion}

Theorems~\ref{thm:shearer} and \ref{thm:local-ind} give comparable
independence estimates. In Proposition~\ref{prop:descent},
Theorem~\ref{thm:shearer} yields the classical descent inequality
\eqref{eq:classic-descent} in part~(i), while
Theorem~\ref{thm:local-ind} yields the stronger descent inequality
\eqref{eq:strong-descent} in part~(ii). The classical descent already
suffices, together with \eqref{eq:H5}, for Theorem~\ref{thm:main}; the
stronger descent removes the extra \(\log\log t\) factor and is
relevant to quantitative comparisons.

Under the assumed lower bound \eqref{eq:H5} for \(r(5,t)\), the
argument above determines the logarithmic exponent of \(r(4,t)\)
almost exactly: $r(4,t)=\frac{t^3}{(\log t)^{2+o(1)}}$.
The upper bound has logarithmic exponent \(2\), while the lower bound
has exponent \(2+\eps\) for every fixed \(\eps>0\); the two bounds
differ only by a possible factor \((\log t)^{o(1)}\). It is widely believed that the
upper bound is sharp up to a constant factor.  We record this
expectation as a precise conjecture.
\begin{conjecture}
$r(4,t)=\Theta\!\left(\frac{t^3}{(\log t)^2}\right)$.
\end{conjecture}

\bibliographystyle{plain}

\end{document}